\newtheorem{theorem}{Theorem}
\newtheorem{proposition}[theorem]{Proposition}
\newtheorem{definition}[theorem]{Definition}
\newtheorem{remark}[theorem]{Remark}
\begin{document}

\title{Cauchy's formula on nonempty closed sets
and a new notion of Riemann--Liouville
fractional integral on time scales\thanks{This is 
a preprint of a paper whose final and definite form is published 
by 'Applied Mathematics Letters' (ISSN:~0893-9659). 
Please cite this article as: \emph{D.F.M. Torres, Cauchy's 
formula on nonempty closed sets and a new notion of Riemann--Liouville 
fractional integral on time scales, 
Appl. Math. Lett. 121 (2021), Art.~107407, 6~pp.}, 
{\tt https://doi.org/10.1016/j.aml.2021.107407}.}}

\author{Delfim F. M. Torres\\
\texttt{delfim@ua.pt}}

\date{R\&D Unit CIDMA, Department of Mathematics,\\ 
University of Aveiro, 3810-193 Aveiro, Portugal}

\maketitle


\begin{abstract}
We prove Cauchy's formula for repeated integration on time scales.
The obtained relation gives rise to new notions of fractional 
integration and differentiation on arbitrary nonempty closed sets. 

\medskip

\noindent \textbf{Keywords:} 
fractional calculus; 
time-scale calculus;
fractional integrals on time scales; 
Cauchy formula for repeated integration on time scales;
Riemann--Liouville and Caputo operators.

\medskip

\noindent \textbf{MSC 2020:} 26A33; 26E70. 
\end{abstract}


\section{Introduction}

Fractional calculus (FC), the study of integration and differentiation 
of non-integer order, is an old subject of current interest \cite{MR3888407}.
On the set $\mathbb{T} = \mathbb{R}$ of real numbers, one can argue that
FC, as a theory, has its origins in the 1823 work of Abel \cite{Abel:1823}
on the tautochrone or isochrone problem, that is, the problem of finding 
the curve for which the time taken by an object sliding without friction 
in uniform gravity to its lowest point is independent of its starting point
on the curve \cite{MyID:310}. FC on $\mathbb{T} = \mathbb{R}$ has been extensively studied, 
with many books on the topic, the encyclopedic reference being \cite{MR1347689}.
Similarly, on the discrete setting $\mathbb{T} = \mathbb{Z}$ the pioneer work 
of FC appears in 1957 in connection with Kuttner's study of difference sequences 
\cite{MR0094618:1957}. A few years later, in 1966, the study of fractional quantum 
calculus on the set $\mathbb{T} = q^\mathbb{Z}$, $0 < q  < 1$, 
of quantum numbers was initiated by Al-Salam \cite{MR0197637:1966}.
Two decades later, with the introduction in 1988, 
by Aulbach and Hilger \cite{MR1062633:1988},
of time-scale calculus on any nonempty closed set $\mathbb{T}$,
the question as whether there exists a unified theory of FC 
on an arbitrary time scale $\mathbb{T}$ became a natural one.
The first works, which considered the question of developing 
a FC on a generic time scale, are three 2011 papers by Bastos et al.
\cite{MR2728463:2011,MyID:179:2011,MR2800417:2011}.
The subject is nowadays under strong current development: 
see \cite{MR3891804:2019,MR3993335:2019,MR4000044:2019} 
and references therein.

One of the approaches to define fractional integration on time scales
is related with the generalization of Euler's gamma function $\Gamma$
to an arbitrary time scale $\mathbb{T}$. Such approach is followed
in the quantum setting by Al-Salam \cite{MR0197637:1966} 
and Agarwal \cite{MR0247389:1969}, where the $q$-gamma function $\Gamma_q$
is used to define fractional $q$-integrals. Such path is problematic
on arbitrary time scales, because of the difficulty to define
a suitable gamma function on $\mathbb{T}$ \cite{MR3136530:2013}.
A different approach is proposed in \cite{MyID:328:2016},
where the fractional integral on time scales is introduced
by integrating on the time scale but making use of the classical
Euler's gamma function $\Gamma$. Although such notion is now being used, 
with success, in several contexts and by different authors, see, e.g.,
\cite{MyID:438,MyID:403:2017,MyID:365:2018,MR3534135:2016,MR3874505:2018}, 
here we show that the definition of \cite{MyID:328:2016}
is not the most natural one on time scales.

The paper is organized as follows. In Section~\ref{sec:2},
we motivate the new definition of fractional integral
on time scales, explaining its difference
with respect to the one of \cite{MyID:328:2016}.
Our central result, the Cauchy formula for repeated 
integration on time scales, in then proved in Section~\ref{sec:3}.
We proceed with Section~\ref{sec:4}, giving the fundamental 
definition of any FC: a notion of fractional integration 
-- see Definition~\ref{def:FI} of fractional integral on time scales 
in the sense of Riemann--Liouville. We end with Section~\ref{sec:conc}
of conclusion, commenting on some possible future directions of research
based on the results here presented.


\section{Motivation}
\label{sec:2}

For an introduction to the calculus on time scales, 
we refer the reader to the comprehensible monograph  
\cite{MR1843232}. Brief but sufficient preliminaries
on the calculus on time scales are found in Section~2
of \cite{MyID:328:2016}. Here we motivate the current
investigation on repeated integration on time scales,
showing why the Riemann--Liouville fractional integral
on time scales given by Definition~10 of \cite{MyID:328:2016}
is not the best one. Let $\mathbb{T}$ be a time scale
with forward operator $\sigma$ and $\Delta$ derivative
and integral. The product rule asserts that
\begin{equation}
\label{eq:PR}
(f \cdot g)^\Delta = f^\Delta \cdot g + f^\sigma \cdot g^\Delta.
\end{equation}
For this reason, while the usual continuous calculus 
of $\mathbb{T} = \mathbb{R}$ asserts that $\left(t^2\right)' = 2t$,
on a general time scale $\mathbb{T}$ one gets 
\begin{equation}
\label{eq:der:t2}
\left(t^2\right)^\Delta = (t \cdot t)^\Delta = t + \sigma(t).
\end{equation}
For $\mathbb{T} = \mathbb{R}$ one has $\sigma(t) = t$ and we get
from \eqref{eq:der:t2} the expected equality $\left(t^2\right)' = 2t$;
but for $\mathbb{T} = \mathbb{Z}$, for example, 
one has $\sigma(t) = t + 1$ and \eqref{eq:der:t2} is telling us
that the forward difference of $t^2$ is $\Delta\left(t^2\right) = 2t + 1$.
Relation \eqref{eq:der:t2} also means that while in $\mathbb{T} = \mathbb{R}$
$\int_{0}^{t} 2 s ds = t^2$, on a general time scale $\mathbb{T}$ the corresponding
equality is
$$
\int_{0}^{t} (s + \sigma(s)) ds = t^2.
$$
These simple examples should be enough for the reader to suspect
that the definition of fractional integral on time scales proposed 
in \cite{MyID:328:2016}, that is, 
\begin{equation}
\label{eq:wrong:FI}
\left({_{a}^{\mathbb{T}}I}_{}^{\alpha} f\right)(t)
:= \frac{1}{\Gamma(\alpha)} \int_{a}^{t} (t-s)^{\alpha-1} f(s)\Delta s,
\end{equation}
is not the natural one on time scales. Indeed, here we claim 
that the correct definition should be
$$
\left({_{a}^{\mathbb{T}}I}_{}^{\alpha} f\right)(t)
:= \frac{1}{\Gamma(\alpha)} \int_{a}^{t} (t-\sigma(s))^{\alpha-1} f(s)\Delta s
$$
(note the appearance of $\sigma$). To show that, we generalize 
Cauchy's formula for repeated integration on time scales, proving
that for $n = 1, 2$ one has
\begin{equation}
\label{eq:CFn}
\left({_{a}^{\mathbb{T}}I}_{}^{n} f\right)(t)
= \frac{1}{\Gamma(n)} \int_{a}^{t} (t-\sigma(s))^{n-1} f(s)\Delta s,
\end{equation}
where ${_{a}^{\mathbb{T}}I}_{}^{n} f$ denotes $n$-times integration 
on the time scale $\mathbb{T}$, that is,
\begin{equation}
\label{eq:def:In}
\left({_{a}^{\mathbb{T}}I}_{}^{n} f\right)(t_n)
:= \int_{a}^{t_n} \cdots \int_{a}^{t_1} f(t_0)\Delta t_0 \cdots \Delta t_{n-1},
\quad n = 1,2
\end{equation}
(cf. Theorem~\ref{thm:CFTS} in Section~\ref{sec:3}).


\section{Cauchy's formula for repeated integration}
\label{sec:3}

There is nothing to prove when $n = 1$, because in this case \eqref{eq:CFn} reduces to
\begin{equation*}
\left({_{a}^{\mathbb{T}}I}_{}^{1} f\right)(t) =  \int_{a}^{t} f(s)\Delta s,
\end{equation*}
which is just definition \eqref{eq:def:In} for $n = 1$.
We prove \eqref{eq:CFn} when $n = 2$. 

\begin{proposition}
\label{prop1}
Let $\mathbb{T}$ be a time scale with $a, t, \tau \in \mathbb{T}$,
$t > a$ and $\tau > a$, and $f$ an integrable function on $\mathbb{T}$. 
Then,
\begin{equation}
\label{eq:RI:n=2}
\int_{a}^{t} \int_{a}^{\tau} f(s) \Delta s \Delta\tau
= \int_{a}^{t} (t -\sigma(s)) f(s) \Delta s.
\end{equation}
\end{proposition}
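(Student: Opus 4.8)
The plan is to prove this by exchanging the order of integration in the iterated integral, treating the inner integral as an antiderivative. Let me define the function $g$ by its integral and apply integration by parts, or alternatively use a direct Fubini-type argument on the time scale.

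The plan is to prove \eqref{eq:RI:n=2} by integration by parts, which on a time scale follows immediately from the product rule \eqref{eq:PR}. First I would set $F(\tau) := \int_a^\tau f(s)\,\Delta s$, so that the left-hand side of \eqref{eq:RI:n=2} reads $\int_a^t F(\tau)\,\Delta\tau$. By the fundamental theorem of calculus on time scales, $F^\Delta = f$ and $F(a) = 0$; these two facts are precisely what will make the boundary terms collapse.

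Integrating the product rule \eqref{eq:PR} from $a$ to $t$ yields the integration-by-parts identity
\[
\int_a^t u^\Delta(\tau)\, v(\tau)\,\Delta\tau = \big[u v\big]_a^t - \int_a^t u^\sigma(\tau)\, v^\Delta(\tau)\,\Delta\tau.
\]
The key idea is the choice of antiderivative: I would take $u(\tau) := \tau - t$, so that $u^\Delta \equiv 1$ and, crucially, $u(t) = 0$, together with $v := F$. Then the integrand $u^\Delta v = F$ reproduces the left-hand side, while the boundary term $\big[uv\big]_a^t = u(t)F(t) - u(a)F(a)$ vanishes because $u(t) = 0$ and $F(a) = 0$. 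What remains is
\[
-\int_a^t u^\sigma(\tau)\, v^\Delta(\tau)\,\Delta\tau = -\int_a^t \big(\sigma(\tau) - t\big) f(\tau)\,\Delta\tau = \int_a^t \big(t - \sigma(\tau)\big) f(\tau)\,\Delta\tau,
\]
which, after renaming $\tau$ to $s$, is exactly the right-hand side of \eqref{eq:RI:n=2}.

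The main conceptual hurdle — rather than a technical obstacle — is to make transparent where the shift $\sigma$ comes from, since this is the whole purpose of the paper. Here it appears for free: the integration-by-parts formula on time scales forces the shifted argument $u^\sigma$ in the surviving integral, and this is precisely the mechanism that turns the naive kernel $t - s$ into the correct $t - \sigma(s)$. As a sanity check I would note that a careless Fubini-type interchange of the iterated integral would instead produce $\int_a^t (t-s) f(s)\,\Delta s$; the discrepancy is exactly the point mass $\mu(s) = \sigma(s) - s$ on the diagonal, which the $\Delta$-integral $\int_a^\tau$ excludes (it runs over $[a,\tau)$), and accounting for it correctly again yields $t - \sigma(s)$. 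This reinforces that the definition \eqref{eq:wrong:FI} is the less natural one on time scales.
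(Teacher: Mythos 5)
Your proof is correct, but it takes a genuinely different route from the paper's. The paper argues in the reverse direction: it defines $g(t)$ to be the right-hand side of \eqref{eq:RI:n=2}, expands the kernel to isolate the $t$-dependence, writing $g(t) = t \int_a^t f(s)\,\Delta s - \int_a^t \sigma(s) f(s)\,\Delta s$, differentiates this with the product rule \eqref{eq:PR} and the fundamental theorem of calculus to get $g^\Delta(t) = \int_a^t f(s)\,\Delta s$, and then recovers $g$ by integrating back: $g(t) = g(t) - g(a) = \int_a^t g^\Delta(\tau)\,\Delta\tau$, which is the left-hand side. You instead start from the left-hand side and finish in a single application of integration by parts, with the judicious antiderivative $u(\tau) = \tau - t$ chosen so that $u(t) = 0$ kills one boundary term while $F(a) = 0$ kills the other. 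The two arguments consume exactly the same ingredients --- time-scale integration by parts is nothing but the integrated product rule, and both proofs invoke the fundamental theorem at the same level of rigor as the paper (strictly, $F^\Delta = f$ wants $f$ rd-continuous, a point the paper also glosses over under the hypothesis ``integrable'') --- but your packaging is more direct: one step, no auxiliary function, and no need for the paper's preliminary splitting of a $t$-dependent kernel before differentiating. It also makes the appearance of $\sigma$ conceptually transparent, since the shift enters through the $u^\sigma$ term that the parts formula forces, which is well aligned with the motivation of Section~\ref{sec:2}. What the paper's packaging buys in exchange is that it is the template reused in the Remark after Theorem~\ref{thm:CFTS}, where the same differentiate-the-expanded-kernel strategy is attempted for general $n$. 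Finally, your Fubini sanity check is sound and could serve as a second independent proof: upon interchanging the order of integration, for fixed $s$ the outer variable $\tau$ ranges over $[\sigma(s), t)$ rather than $[s,t)$, and $\int_{\sigma(s)}^{t} \Delta\tau = t - \sigma(s)$ produces the correct kernel directly.
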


\begin{proof}
Let $g(t)$ be the right hand side of \eqref{eq:RI:n=2}:
\begin{equation}
\label{eq:g:n=2}
g(t) := \int_{a}^{t} (t -\sigma(s)) f(s) \Delta s.
\end{equation}
Observe that $g(t)$ can be written as
\begin{equation}
\label{eq:g:n=2:expand}
g(t) = t \int_{a}^{t} f(s) \Delta s 
- \int_{a}^{t} \sigma(s) f(s) \Delta s.
\end{equation}
Differentiating \eqref{eq:g:n=2:expand}, we get from the product rule
\eqref{eq:PR} and the fundamental theorem of the calculus on time scales that
\begin{equation}
\label{eq:gdelta:n=2}
g^\Delta(t) = \left[\int_{a}^{t} f(s) \Delta s 
+ \sigma(t) f(t)\right] - \sigma(t) f(t) 
= \int_{a}^{t} f(s) \Delta s. 
\end{equation}
Since by definition \eqref{eq:g:n=2} of $g(t)$ 
one has $g(a) = 0$, we know that
\begin{equation}
\label{eq:g:n=2:TFCI}
g(t) = g(t) - g(a) = \int_{a}^{t} g^\Delta(\tau) \Delta \tau.
\end{equation}
Using \eqref{eq:gdelta:n=2} in \eqref{eq:g:n=2:TFCI}, 
we arrive to 
$$
g(t) = \int_{a}^{t} \int_{a}^{\tau} f(s) \Delta s \Delta\tau,
$$
which proves the intended relation.
\end{proof}

We now state Cauchy's formula of repeated integration
on time scales. Proposition~\ref{prop1} is just the particular 
case of Theorem~\ref{thm:CFTS} with $n = 2$.

\begin{theorem}[Cauchy's result on time scales]
\label{thm:CFTS}
Let $n \in \{1,2\}$, $\mathbb{T}$ be a time scale with 
$a, t_1, \ldots, t_n \in \mathbb{T}$,
$t_i > a$, $i = 1, \ldots, n$, and 
$f$ an integrable function on $\mathbb{T}$. 
Then,
\begin{equation}
\label{eq:RI}
\int_{a}^{t_n} \cdots \int_{a}^{t_1} f(t_0) \Delta t_0 \cdots \Delta t_{n-1}
= \frac{1}{(n-1)!}\int_{a}^{t_n} (t_n -\sigma(s))^{n-1} f(s) \Delta s.
\end{equation}
\end{theorem}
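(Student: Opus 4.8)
The plan is to settle the theorem by a direct case analysis on the two admissible values $n \in \{1,2\}$, since each case either collapses to the definition or has already been secured. No induction is needed, precisely because the statement is confined to these two values.

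First I would dispatch $n = 1$. The right-hand side of \eqref{eq:RI} then reads $\frac{1}{0!}\int_{a}^{t_1}(t_1-\sigma(s))^{0} f(s)\,\Delta s$; using $0! = 1$ and $(t_1-\sigma(s))^{0} = 1$, this is just $\int_{a}^{t_1} f(s)\,\Delta s$, which is the left-hand side for a single integration. This case is therefore immediate, as was already noted before the statement.

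Next I would handle $n = 2$. Writing $t_2 = t$ and $t_1 = \tau$, identity \eqref{eq:RI} becomes
\[
\int_{a}^{t}\int_{a}^{\tau} f(t_0)\,\Delta t_0\,\Delta\tau
= \frac{1}{1!}\int_{a}^{t}(t-\sigma(s))^{2-1} f(s)\,\Delta s,
\]
and since $1! = 1$ and $(t-\sigma(s))^{2-1} = t-\sigma(s)$, the right-hand side is exactly that of \eqref{eq:RI:n=2}. Hence the case $n = 2$ is nothing other than Proposition~\ref{prop1}, already proved. Combining the two cases establishes the theorem.

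The genuinely delicate point lies \emph{outside} the stated range, and it explains why the theorem stops at $n = 2$. To reach $n \geq 3$ one would naturally imitate the continuous argument: put $g$ equal to the right-hand side, differentiate in the upper limit, and recover the iterated integral through the fundamental theorem. The obstruction is that on a time scale the $\Delta$-derivative in $t$ of $(t-\sigma(s))^{n-1}$ is \emph{not} $(n-1)(t-\sigma(s))^{n-2}$, because the product rule \eqref{eq:PR} forces extra $\sigma$-terms to appear; the only polynomials enjoying such telescoping differentiation are the generalized monomials of time-scale calculus, which differ from $(t-\sigma(s))^{n-1}$ once $n \geq 3$. The convenient cancellation of the $\sigma(t)f(t)$ contributions witnessed in \eqref{eq:gdelta:n=2} is special to $n = 2$ and does not recur, so the case analysis above is the whole of what can be claimed for this form of the identity.
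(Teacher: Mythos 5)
Your proof is correct and follows exactly the paper's own route: the case $n=1$ is immediate from the definition, and the case $n=2$ is precisely Proposition~\ref{prop1}, which the paper likewise invokes verbatim. Your closing discussion of why the identity breaks down for $n \geq 3$ also mirrors the paper's remark on the failure of the telescoping differentiation of $(t-\sigma(s))^{n-1}$ on a general time scale.
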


It is important to note that \eqref{eq:RI} does not hold for $n > 2$,
for example for $\mathbb{T}=\mathbb{Z}$. 

\begin{remark}
Let $g(t_n)$ be the right hand side of \eqref{eq:RI}:
\begin{equation}
\label{eq:g}
g(t) := \frac{1}{(n-1)!}\int_{a}^{t} (t -\sigma(s))^{n-1} f(s) \Delta s.
\end{equation}
By the binomial theorem, we observe that $g(t)$ can be written as
\begin{equation}
\label{eq:gaftBT}
\begin{split}
g(t) &= \frac{1}{(n-1)!} \int_{a}^{t} 
\sum_{k=0}^{n-1} \left[\binom{n-1}{k} (-1)^k t^{n-1-k} \sigma^k(s)\right] f(s) \Delta s\\
&= \sum_{k=0}^{n-1} \frac{(-1)^k}{k! (n-1-k)!} t^{n-1-k}
\int_{a}^{t} \sigma^k(s) f(s) \Delta s.
\end{split}
\end{equation}
Differentiating \eqref{eq:gaftBT}, we get from the product rule
and the fundamental theorem of the calculus on time scales that
\begin{equation}
\label{eq:gdelta}
g^\Delta(t) = \sum_{k=0}^{n-2} \left[
\frac{(-1)^k \left(t^{n-2-k} + \sigma(t) \left(t^{n-2-k}\right)^\Delta\right)}{k! (n-1-k)!} 
\int_{a}^{t} \sigma^k(s) f(s) \Delta s\right]
+ \sum_{k=0}^{n-1} \frac{(-1)^k}{k! (n-1-k)!} \, \sigma^{n-1}(t) f(t).
\end{equation}
Since 
$$
\sum_{k=0}^{n-1} \frac{(-1)^k}{k! (n-1-k)!} = 0,
$$
it follows from \eqref{eq:gdelta} that
\begin{equation}
\label{eq:gdelta1}
g^\Delta(t) = \sum_{k=0}^{n-2} 
\frac{(-1)^k \left(t^{n-2-k} + \sigma(t) \left(t^{n-2-k}\right)^\Delta\right)}{k! (n-1-k)!} 
\int_{a}^{t} \sigma^k(s) f(s) \Delta s.
\end{equation}
Observe that, in a general time scale $\mathbb{T}$, 
it may happen that \eqref{eq:gdelta1} is not delta-differentiable.
\end{remark}

Theorem~\ref{thm:CFTS} provides the foundation
of a proper fractional calculus on time scales.


\section{A proper Fractional Calculus on Time Scales}
\label{sec:4}

Although there is a myriad of different FC on the literature,
all of them begin by defining a notion of fractional
integral and then proceeding from there \cite{MR1347689}. 
The most common definition of fractional integral 
is the one of Riemann--Liouville:
\begin{equation}
\label{eq:def:IRL}
\left({_{a}I}^{\alpha}f\right)(t)
:= \frac{1}{\Gamma(\alpha)} \int_{a}^{t} (t-s)^{\alpha-1} f(s) ds.
\end{equation}
The corresponding Riemann--Liouville fractional derivative of order $\alpha$ 
is then introduced by computing the $n$th order derivative over the fractional integral 
of order $n - \alpha$, where $n$ is the smallest integer greater than $\alpha$,
that is, $n := \lceil \alpha \rceil $:
\begin{equation}
\label{eq:def:DRL}
{_{a}D}^{\alpha}f = \frac{d^n}{dt^n} \left({_{a}I}^{n-\alpha}f\right).
\end{equation}
Another option for computing fractional derivatives 
was proposed by Caputo in 1967 \cite{Caputo:1967},
and consists in interchanging the order of the operators
$\frac{d^n}{dt^n}$ and ${_{a}I}^{n-\alpha}$
in \eqref{eq:def:DRL}:
\begin{equation}
\label{eq:def:DRL:C}
{_{a}D}_{C}^{\alpha}f = {_{a}I}^{n-\alpha}\left(f^{(n)}\right).
\end{equation}

Having in mind that $\Gamma(n) = (n-1)!$, Theorem~\ref{thm:CFTS} shows
that \eqref{eq:CFn} holds for $n \in \{1, 2\}$. The question is what
should be ${_{a}^{\mathbb{T}}I}_{}^{\alpha} f$ when $\alpha$
is a positive real number? The answer should be clear since \eqref{eq:CFn} makes
sense for any nonnegative $n \in \mathbb{R}$. This provides the proper
notion of fractional integral on time scales in the sense of Riemann--Liouville.

\begin{definition}[Riemann--Liouville fractional integral on time scales]
\label{def:FI}
Suppose $\mathbb{T}$ is a time scale, $[a,b]$ is an interval of $\mathbb{T}$,
and $f$ is an integrable function on $[a,b]$. Let $\alpha > 0$.
Then, the (left) fractional integral of order $\alpha$ of $f$ is defined by
\begin{equation}
\label{eq:correct:FI}
\left({_{a}^{\mathbb{T}}I}^{\alpha}f\right)(t)
:= \frac{1}{\Gamma(\alpha)} \int_{a}^{t} (t-\sigma(s))^{\alpha-1} f(s)\Delta s,
\end{equation}
where $\Gamma$ is the gamma function.
\end{definition}

In the case $\mathbb{T} = \mathbb{R}$, one gets from 
Definition~\ref{def:FI} the standard notion \eqref{eq:def:IRL}.
The generalization \eqref{eq:correct:FI} is not trivial, 
in the sense that \eqref{eq:wrong:FI}, used currently
in the literature, also generalizes \eqref{eq:def:IRL}.
However, only Definition~\ref{def:FI} is coherent
with Cauchy's formula (Theorem~\ref{thm:CFTS}).

Using Definition~\ref{def:FI}, a fractional calculus
on time scales can now be developed. As expected, the first steps
of such fractional calculus are the notions of fractional 
differentiation on time scales in the sense
of Riemann--Liouville,
\begin{equation}
\label{eq:def:DRL:TS}
{_{a}^{\mathbb{T}}D}^{\alpha}f
= \left({_{a}I}^{n-\alpha}f\right)^{\Delta^n},
\end{equation}
and Caputo,
\begin{equation}
\label{eq:def:DRL:C:TS}
{_{a}^{\mathbb{T}}D}_{C}^{\alpha}f
= {_{a}I}^{n-\alpha}\left(f^{\Delta^n}\right),
\end{equation}
where $n := \lceil \alpha \rceil $, which are the natural extensions 
of \eqref{eq:def:DRL} and \eqref{eq:def:DRL:C}, respectively. Such 
fractional calculus on time scales is rich and technical 
and its development will be addressed elsewhere.


\section{Conclusion}
\label{sec:conc}

We have proposed a new definition of fractional integral on time scales, 
using Cauchy's formula for repeated integration on time scales.  
There is much further work to be done based on this new notion. 
Some next natural steps are to consider fractional differential equations
on time scales, eigenvalue problems, fractional dynamic inequalities,
which are examples of very active research areas in both time-scale and fractional
communities. About applications, we claim that our
fractional calculus on time scales has a big potential
in mathematical modeling, for example in epidemiology
and consensus problems.


\section*{Acknowledgement}

This work was supported by FCT and the 
Center for Research and Development in Mathematics 
and Applications (CIDMA), project UIDB/04106/2020.



\end{document}